\newcommand{\nc}{\newcommand}
\newcommand{\delete}[1]{}
\nc{\dfootnote}[1]{{}}          %{{}}
\nc{\ffootnote}[1]{\dfootnote{#1}}
\nc{\mfootnote}[1]{{}}        % Use this to suppress footnotes
\nc{\ofootnote}[1]{{}}        % Use this to suppress footnotes
\nc{\mfootnote}[1]{\footnote{#1}} % Use this to show footnotes
\nc{\ofootnote}[1]{\footnote{\tiny Older version: #1}} % Use this to show footnotes
\nc{\mlabel}[1]{\label{#1}}  % Use this to suppress names
\nc{\mcite}[1]{\cite{#1}}  % Use this to suppress names
\nc{\mref}[1]{\ref{#1}}  % Use this to suppress names
\nc{\mkeep}[1]{{}}      % Use this to suppress marginpar
\nc{\mbibitem}[1]{\bibitem{#1}} % Use this to show number name
\nc{\mcite}[1]{\cite{#1}{{\bf{{\ }(#1)}}}}  % Use this lines to show names
\nc{\mlabel}[1]{\label{#1}  % Use the next two lines to show names
{\hfill \hspace{1cm}{\bf{{\ }\hfill(#1)}}}}
\nc{\mref}[1]{\ref{#1}{{\bf{{\ }(#1)}}}}  % Use this lines to show names
\nc{\mbibitem}[1]{\bibitem[\bf #1]{#1}} % Use this to show name
\nc{\mkeep}[1]{\marginpar{{\bf #1}}} % Use this to show marginpar
\newtheorem{theorem}{Theorem}[section]
\newtheorem{prop}[theorem]{Proposition}
\newtheorem{coro}[theorem]{Corollary}
\newtheorem{prop-def}{Proposition-Definition}[section]
\nc{\bond}{\vdash}
\nc{\comp}[1]{\langle #1\rangle} \nc{\spr}{\cdot}
\nc{\disp}[1]{\displaystyle{#1}}
\nc{\sk}[1]{\mrm{sk}(#1)}
\nc{\ve}[1]{\mrm{vec}({#1})}
\nc{\pf}[1]{\check{#1}}
\nc{\Li}{\mrm{Li}}
\nc{\icut}{^!}
\nc{\bin}[2]{ (_{\stackrel{\scs{#1}}{\scs{#2}}})}  %binomial coeff
\nc{\binc}[2]{ \bigg (\!\! \begin{array}{c} \scs{#1}\\
    \scs{#2} \end{array}\!\! \bigg )}  %binomial coeff
\nc{\bbinc}[2]{ \left (\!\! \begin{array}{c} {#1}\\
    {#2} \end{array}\!\! \right )}  %binomial coeff
\nc{\bincc}[2]{  \left ( {\scs{#1} \atop
    \vspace{-.5cm}\scs{#2}} \right )}  %binomial coeff
\nc{\sarray}[2]{\begin{array}{c}#1 \vspace{.1cm}\\ \hline
    \vspace{-.35cm} \\ #2 \end{array}}
\nc{\spair}[2]{\big[\begin{array}{c}\scs{#1} \\ \scs{#2} \end{array} \big]}
\nc{\ppair}[2]{\big\langle\begin{array}{c}\scs{#1} \\ \scs{#2} \end{array} \big\rangle}
\nc{\pfpair}[2]{\big(\begin{array}{c}\scs{#1} \\ \scs{#2} \end{array} \big)}
\nc{\zsg}[1]{\widehat{#1}} \nc{\bs}{\bar{S}}
\nc{\dcup}{\stackrel{\bullet}{\cup}}
\nc{\dbigcup}{\stackrel{\bullet}{\bigcup}} \nc{\la}{\longrightarrow}
\nc{\fe}{\'{e}} \nc{\rar}{\rightarrow} \nc{\dar}{\downarrow}
\nc{\dap}[1]{\downarrow \rlap{$\scriptstyle{#1}$}}
\nc{\uap}[1]{\uparrow \rlap{$\scriptstyle{#1}$}}
\nc{\dt}[1]{{#1}^\sharp} \nc{\st}[1]{{#1}^\flat}
\nc{\defeq}{\stackrel{\rm def}{=}} \nc{\dis}[1]{\displaystyle{#1}}
\nc{\dotcup}{\ \displaystyle{\bigcup^\bullet}\ } \nc{\hcm}{\
\hat{,}\ } \nc{\hcirc}{\hat{\circ}} \nc{\hts}{\hat{\shpr}}
\nc{\lts}{\stackrel{\leftarrow}{\shpr}}
\nc{\rts}{\stackrel{\rightarrow}{\shpr}} \nc{\lleft}{[}
\nc{\lright}{]} \nc{\uni}[1]{\tilde{#1}} \nc{\free}[1]{\bar{#1}}
\nc{\den}[1]{\check{#1}} \nc{\lrpa}{\wr} \nc{\curlyl}{\left \{
\begin{array}{c} {} \\ {} \end{array}
    \right . \!\!\!\!\!\!\!}
\nc{\curlyr}{ \!\!\!\!\!\!\!
    \left . \begin{array}{c} {} \\ {} \end{array}
    \right \} }
\nc{\longmid}{\left | \begin{array}{c} {} \\ {} \end{array}
    \right . \!\!\!\!\!\!\!}
\nc{\ot}{\otimes} \nc{\bigot}{\bigotimes} \nc{\mdiv}{\mrm{div}}
\nc{\shpf}{\frakp}
\nc{\sg}{G}
\nc{\tg}{T}
\nc{\ssg}[1]{\overline{#1}}
\nc{\psg}[1]{\widetilde{#1}}
\nc{\zg}{{Z}} \nc{\ig}{I} \nc{\pg}{P} \nc{\jg}{J}
\nc{\eg}{E} \nc{\fg}{F} \nc{\cg}{C} \nc{\mg}{M} \nc{\abg}{C} \nc{\ug}{{U}}
\nc{\bas}{B} \nc{\Lyn}{\mrm{Lyn}} \nc{\lyn}{\Lyn} \nc{\sG}{{\cals}}
\nc{\iG}{{\cali}} \nc{\jG}{{\calj}} \nc{\eG}{{\cale}}
\nc{\pG}{{\calp}} \nc{\fG}{{\calf}} \nc{\cG}{{\calc}}
\nc{\mG}{{\calm}} \nc{\ora}[1]{\stackrel{#1}{\rar}}
\nc{\ola}[1]{\stackrel{#1}{\la}}%${\Bbb Z}$
\nc{\pex}[1]{\{#1\}} \nc{\scs}[1]{\scriptstyle{#1}}
\nc{\mrm}[1]{{\rm #1}} \nc{\sym}[1]{{\widehat{#1}}}
\nc{\margin}[1]{\marginpar{\rm #1}}   %{\rm #1}}
\nc{\dirlim}{\displaystyle{\lim_{\longrightarrow}}\,}
\nc{\invlim}{\displaystyle{\lim_{\longleftarrow}}\,}
\nc{\mvp}{\vspace{0.5cm}} \nc{\svp}{\vspace{2cm}}
\nc{\vp}{\vspace{8cm}} \nc{\proofbegin}{\noindent{\bf Proof: }}
\nc{\proofend}{$\blacksquare$ \vspace{0.5cm}}
\nc{\shqs}{\eta} \font\cyr=wncyr10
\nc{\sha}{{\mbox{\cyr X}}}  %used to be \cyr
\newfont{\scyr}{wncyr10 scaled 550}
\nc{\ssha}{\,\mbox{\bf \scyr X}\,}
\newfont{\bcyr}{wncyr10 scaled 1000}
\nc{\qssha}{{{\ssha\hspace{-2pt}_\ast}}\,}
\nc{\qsshab}{{{\ssha\hspace{-2pt}_{\rho}}}\,}
\nc{\pssha}{{\star}}
\nc{\bsh}{{^{\qsshab}}}
\nc{\ncsha}{{\mbox{\cyr X}^{\mathrm NC}}} \nc{\ncshao}{{\mbox{\cyr
X}^{\mathrm NC,\,0}}}
\nc{\shpr}{\diamond}    %Shuffle product
\nc{\shf}{{^{\ssha}}} \nc{\qsh}{{^{\ast}}}
\nc{\psh}{{^{\pi}}}
\nc{\pfsha}{{\ssha_{\pi}}}
\nc{\esh}{{^{\ssha_\eta}}}
\nc{\lshf}{_{\ssha}} \nc{\lqsh}{_{\ast}} \nc{\lzero}{_{\hskip -5pt
0}} \nc{\shzero}{_{\hskip -7.5pt 0}} \nc{\lone}{_{\hskip -7.5pt 1}}
\nc{\shprl}{{{\shpr}_\lambda}}
\nc{\shpro}{\diamond^0}    %Shuffle product
\nc{\shpru}{\check{\diamond}} \nc{\catpr}{\diamond_l}
\nc{\rcatpr}{\diamond_r} \nc{\lapr}{\diamond_a}
\nc{\lepr}{\diamond_e} \nc{\tcon}{^{\ot}} \nc{\conv}{_c}
\nc{\vep}{\varepsilon} \nc{\labs}{\mid\!} \nc{\rabs}{\!\mid}
\nc{\hsha}{\widehat{\sha}} \nc{\lsha}{\stackrel{\leftarrow}{\sha}}
\nc{\rsha}{\stackrel{\rightarrow}{\sha}}
\nc{\EDS}{{\mrm{EDS}}\xspace} \nc{\DS}{{\mathbf{DS}}}
\nc{\lc}{[} \nc{\rc}{]} \nc{\rbset}{R} \nc{\rbnum}{r}
\nc{\rbfun}{\mathbf{R}} \nc{\pset}{P} \nc{\pnum}{p}
\nc{\pfun}{\mathbf{P}} \nc{\spset}{SP} \nc{\spnum}{sp}
\nc{\spgen}{\mathbf{SP}} \nc{\srbi}[1]{\{#1\}}
\nc{\rind}{r} \nc{\sind}{s} \nc{\tind}{t}  \nc{\kdim}{k}
\nc{\ldim}{\ell}
\nc{\funcf}{{\mathfrak f}}
\nc{\alga}{{A}} \nc{\ann}{\mrm{ann}} \nc{\Aut}{\mrm{Aut}}
\nc{\can}{\mrm{can}} \nc{\colim}{\mrm{colim}} \nc{\Cont}{\mrm{Cont}}
\nc{\rchar}{\mrm{char}} \nc{\cok}{\mrm{coker}} \nc{\dtf}{{R-{\rm
tf}}} \nc{\dtor}{{R-{\rm tor}}}
\nc{\Div}{{\mrm Div}} \nc{\End}{\mrm{End}} \nc{\Ext}{\mrm{Ext}}
\nc{\Fil}{\mrm{Fil}} \nc{\Frob}{\mrm{Frob}} \nc{\Gal}{\mrm{Gal}}
\nc{\GL}{\mrm{GL}} \nc{\lord}{\mrm{L-order}\xspace}
\nc{\rme}{\mrm{E}} \nc{\rmt}{\mrm{T}} \nc{\Sym}{\mrm{Sym}}
\nc{\Hom}{\mrm{Hom}} \nc{\hsr}{\mrm{H}} \nc{\hpol}{\mrm{HP}}
\nc{\id}{\mrm{id}} \nc{\im}{\mrm{im}} \nc{\incl}{\mrm{incl}}
\nc{\length}{\mrm{length}} \nc{\leng}{\mrm{\ell}} \nc{\LR}{\mrm{LR}}
\nc{\mchar}{\mrm char}
\nc{\MZV}{\mrm{MZV}\xspace}
\nc{\MZVs}{\mrm{MZVs}\xspace}
\nc{\MZVf}{\mrm{MZV fractions}\xspace}
\nc{\PF}{\mathbf{PF}}
\nc{\MPV}{\mrm{MPV}\xspace}
\nc{\MPVs}{\mrm{MPVs}\xspace}
\nc{\MPL}{\mrm{MPL}\xspace}
\nc{\MPLs}{\mrm{MPLs}\xspace}
\nc{\mzvalg}{\mathbf{MZV}}
\nc{\mplalg}{\mathbf{MPV}}
\nc{\sfalg}{\mrm{SHF}}
\nc{\qfalg}{\mrm{QHF}}
\nc{\pfalg}{\mathbf{PF}}
\nc{\edsalg}{\mathbf{EDS}} \nc{\qeds}{$\QQ$-EDS\xspace}
\nc{\zeds}{$\ZZ$-EDS\xspace} \nc{\zpeds}{$\ZZ_p$-EDS\xspace}
\nc{\fpeds}{$\FF_p$-EDS\xspace} \nc{\NC}{\mrm{NC}}
\nc{\mpart}{\mrm{part}} \nc{\os}{\mrm{OS}} \nc{\qs}{\mrm{QS}}
\nc{\ql}{{\QQ_\ell}} \nc{\qp}{{\QQ_p}} \nc{\rank}{\mrm{rank}}
\nc{\rcot}{\mrm{cot}} \nc{\rdef}{\mrm{def}} \nc{\rdiv}{{\rm div}}
\nc{\rtf}{{\rm tf}} \nc{\rtor}{{\rm tor}} \nc{\res}{\mrm{res}}
\nc{\sh}{\mrm{Sh}} \nc{\TL}{\mrm{TL}} \nc{\Spec}{\mrm{Spec}}
\nc{\tor}{\mrm{tor}} \nc{\Tr}{\mrm{Tr}} \nc{\tr}{\mrm{tr}}
\nc{\ETC}{\mathrm{ETC}} \nc{\ETL}{\mathrm{ETL}}
\nc{\EL}{\mathrm{EL}} \nc{\RETL}{\mathrm{RETL}}
\nc{\EETL}{\widetilde{\TL}} \nc{\word}{\rm word\xspace}
\nc{\words}{\rm words\xspace} \nc{\varab}{\phi_{\alpha,\beta}}
\nc{\ab}{\mathbf{Ab}} \nc{\Alg}{\mathbf{Alg}}
\nc{\Algo}{\mathbf{Alg}^0} \nc{\Bax}{\mathbf{Bax}}
\nc{\Baxo}{\mathbf{Bax}^0} \nc{\RBo}{\mathbf{RB}^0}
\nc{\BRB}{\mathbf{RB}} \nc{\Dend}{\mathbf{DD}} \nc{\bfe}{{\bf e}}
\nc{\bff}{{\bf f}} \nc{\bfk}{{\bf k}} \nc{\bfone}{{\bf 1}}
\nc{\base}[1]{{a_{#1}}} \nc{\detail}{\marginpar{\bf More detail}
    \noindent{\bf Need more detail!}
    \svp}
\nc{\Diff}{\mathbf{Diff}} \nc{\gap}{\marginpar{\bf
Incomplete}\noindent{\bf Incomplete!!}
    \svp}
\nc{\FMod}{\mathbf{FMod}} \nc{\RB}{\mathbf{RB}}
\nc{\Int}{\mathbf{Int}} \nc{\Mon}{\mathbf{Mon}}
\nc{\remarks}{\noindent{\bf Remarks: }} \nc{\Rep}{\mathbf{Rep}}
\nc{\Rings}{\mathbf{Rings}} \nc{\Sets}{\mathbf{Sets}}
\nc{\DT}{\mathbf{DT}}
\nc{\BA}{{\Bbb A}} \nc{\CC}{{\Bbb C}} \nc{\DD}{{\Bbb D}}
\nc{\EE}{{\Bbb E}} \nc{\FF}{{\Bbb F}} \nc{\GG}{{\Bbb G}}
\nc{\HH}{{\Bbb H}} \nc{\LL}{{\Bbb L}} \nc{\NN}{{\Bbb N}}
\nc{\QQ}{{\Bbb Q}} \nc{\RR}{{\Bbb R}} \nc{\TT}{{\Bbb T}}
\nc{\VV}{{\Bbb V}} \nc{\ZZ}{{\Bbb Z}}
\nc{\cala}{{\mathcal A}} \nc{\calc}{{\mathcal C}}
\nc{\cald}{{\mathcal D}} \nc{\cale}{{\mathcal E}}
\nc{\calf}{{\mathcal F}} \nc{\calg}{{\mathcal G}}
\nc{\calh}{{\mathcal H}} \nc{\cali}{{\mathcal I}}
\nc{\calj}{{\mathcal J}} \nc{\call}{{\mathcal L}}
\nc{\calm}{{\mathcal M}} \nc{\caln}{{\mathcal N}}
\nc{\calo}{{\mathcal O}} \nc{\calp}{{\mathcal P}}
\nc{\calr}{{\mathcal R}} \nc{\calt}{{\mathcal T}}
\nc{\calw}{{\mathcal W}} \nc{\calx}{{\mathcal X}}
\nc{\CA}{\mathcal{A}}
\nc\indI{\mathcal{I}}
\nc{\fraka}{{\mathfrak a}} \nc{\frakB}{{\mathfrak B}}
\nc{\frakb}{{\mathfrak b}} \nc{\frakd}{{\mathfrak d}}
\nc{\frakF}{{\mathfrak F}} \nc{\frakf}{{\mathfrak f}}
\nc{\frakg}{{\mathfrak g}} \nc{\frakL}{{\mathfrak L}}
\nc{\frakm}{{\mathfrak m}} \nc{\frakM}{{\mathfrak M}}
\nc{\frakMo}{{\mathfrak M}^0} \nc{\frakp}{{\mathfrak p}}
\nc{\frakt}{{\mathfrak t}}
\nc{\frakw}{{\mathfrak w}}
\nc{\frakx}{{\mathfrak x}} \nc{\ox}{\overline{\frakx}}
\nc{\frakX}{{\mathfrak X}} \nc{\fraky}{{\mathfrak y}}
\renewcommand\geq{\geqslant}
\renewcommand\leq{\leqslant}
\nc\rbop{{\lc\,\,\rc}} \nc\rbopi[1]{{\lc_{#1} \, \rc_{#1}}}
\nc{\redtext}[1]{\textcolor{red}{#1}}
\begin{document}

\title[Shuffle relation of fractions]
{The shuffle relation of fractions from multiple zeta values}
%
%========================================================================================%
\author{Li Guo}
\address{Department of Mathematics and Computer Science,
         Rutgers University,
         Newark, NJ 07102, USA}
\email{liguo@newark.rutgers.edu}
\author{Bingyong Xie}
\address{Department of Mathematics, East China Normal University, Shanghai, 200241, China}
\email{byxie@math.ecnu.edu.cn}

%=================================================================================
%\date{\today}
%==================================================================================
%\begin{document}

\begin{abstract}
Partial fraction methods play an important role in the study of
multiple zeta values. One class of such fractions is related to the
integral representations of \MZVs. We show that this class  of
fractions has a natural structure of shuffle algebra. This finding
conceptualizes the connections among the various methods of stuffle,
shuffle and partial fractions in the study of \MZVs. This approach
also gives an explicit product formula of the fractions.
\end{abstract}

\maketitle

%==================================================================================

\setcounter{section}{0}

%================================================================================

\section{Introduction}

Let $k$ be a positive integer. For positive integers $s_i$ and variables $u_i$, $1\leq i\leq k$, define
\begin{equation} \funcf\pfpair{s_1,\cdots,s_k}{u_1,\cdots,u_k}
:= \frac{1}{(u_1+\cdots+u_k)^{s_1}(u_2+\cdots+u_k)^{s_2}\cdots
u_k^{s_k}}. \mlabel{eq:pafr}
\end{equation}
In the spacial case when $s_i=1$, $1\leq i\leq k,$ such fractions appeared in connection with differential geometry~\mcite{Bu,DN} and
polylogarithms~\mcite{Go3} where their products were shown to satisfy
the shuffle relation. For example,
\begin{eqnarray} \frac{1}{u_1}\frac{1}{(v_1+v_2)v_2}&=&\frac{1}{(u_1+v_1+v_2)(v_1+v_2)v_2} +\frac{1}{(u_1+v_1+v_2)(u_1+v_2)v_2}\notag \\
&& + \frac{1}{(u_1+v_1+v_2)(u_1+v_2)v_2}.
\mlabel{eq:simplesh}
\end{eqnarray}
In general, such fractions occur naturally
from multiple zeta values which, since their introduction in the
early 1990s, have attracted much attention from a wide range of
areas in mathematics and mathematical physics~\mcite{3BL2,B-K,GKZ,G-M,GZ,Ho1,IKZ,Ko,M-P,Te}. Multiple zeta values (\MZVs) are special
values of the multi-variable complex functions
$$ \zeta(s_1,\cdots,s_k)=\sum_{n_1>\cdots >n_s>0} \frac{1}{n_1^{s_1}\cdots n_k^{s_k}}$$
at positive integers $s_i, 1\leq i\leq k$ with $s_1\geq 2$.
With the change of variable $n_i=u_i+\cdots+u_k, 1\leq i\leq k,$ we have the
well-known rational fraction representation of multiple zeta values:
\begin{equation}
\zeta(s_1,\cdots,s_k)=\sum_{u_1,\cdots,u_k\geq 1}
\funcf\pfpair{s_1,\cdots,s_k}{u_1,\cdots,u_k}. \mlabel{eq:mzvpf}
\end{equation}
For this reason, we will call these fractions
$\funcf\pfpair{s_1,\cdots,s_k}{u_1,\cdots,u_k} $ the {\bf MZV
fractions}. Thus any relation among \MZV  fractions gives a
corresponding relation  among \MZVs after summing over the indices
$u_i$'s. Indeed, many relations among multiple zeta values are
obtained by studying relations among \MZV fractions. The method, the
so called partial fractional method, can be traced back to Euler in
the case when $k=2$ and remains one of the most effective methods
until today~\mcite{GKZ,Gr,Ohno}. For example, Granville~\mcite{Gr} used this method to give a proof of the sum formula. Gangl, Kaneko and Zagier~\mcite{GKZ} used the fraction formula
\begin{equation}
\frac{1}{m^i}\, \frac{1}{n^j}= \sum_{r+s=i+j}
\left ( \binc{r-1}{i-1}\frac{1}{(m+n)^rn^s}
+\binc{r-1}{j-1}\frac{1}{(m+n)^rm^s}\right),
\mlabel{eq:prod2}
\end{equation}
to obtain the well-known Euler's decomposition formula
$$ \zeta(i)\zeta(j)= \sum_{r+s=i+j}
\left ( \binc{r-1}{i-1}\zeta(r,s)
+\binc{r-1}{j-1}\zeta(r,s)\right), i,j\geq 2.
$$
This formula of Euler has been generalized recently by the authors~\mcite{GX2} to a product formula of any two MZVs.

The study of these fractions are interesting on their own right because of their applications outside of \MZVs and that they make sense even if $s_1=1$ when $\zeta(s_1,\cdots,s_k)$ is no longer defined. For example when $s_i=1$ for all $1\leq i\leq k$, these fractions are shown to multiply according to the shuffle product rule~\mcite{Bu,Go3} as mentioned above. However, a product formula for
two \MZV fractions is known only in this case and in the case of Eq.~(\mref{eq:prod2}). In this paper we will provide a product formula for any two \MZV fractions making use of the general double shuffle framework introduced in our previous work~\mcite{GX2} which is obtained with motivation from the shuffle relation and quasi-shuffle (stuffle) relation of \MZVs.
We are able to apply this general framework by showing that the \MZV fractions in Eq.~(\mref{eq:pafr}) have canonical integral representations, in the spirit of the integral representations of \MZVs by Konsevich~\mcite{Ko}. By the standard summation process for \MZVs, we recover the above mentioned generalization of Euler's decomposition formula of \MZVs.

As an example of our product formula, we have
\begin{eqnarray}
\frac{1}{u_1^{r_1}}\, \frac{1}{(v_1+v_2)^{s_1}v_2^{s_2}}  &=
&
\hspace{-.5cm}\sum\limits_{\tiny
\begin{array}{l}\tind_1,\tind_2,\tind_3\geq 1
\\ \tind_1+\tind_2
=\rind_1+\sind_1 \end{array}} \binc{\tind_1-1}{\rind_1-1}
\frac{1}{(u_1+v_1+v_2)^{t_1}(v_1+v_2)^{t_2}v_2^{s_2}}
\notag
\\ && +
\sum_{\tiny
\begin{array}{l}\tind_1,\tind_2,\tind_3\geq 1
\\ \tind_1+\tind_2+\tind_3 \\
=\rind_1+\sind_1+\sind_2 \end{array}} \bigg[
\binc{\tind_1-1}{\sind_1-1}\binc{\tind_2-1}{s_2-t_3}
\frac{1}{(u_1+v_1+v_2)^{t_1}(u_1+v_2)^{t_2}v_2^{t_3}}
\label{eq:1-2}\\
    &&  \qquad \qquad \qquad +
\binc{\tind_1-1}{\sind_1-1}\binc{\tind_2-1}{\sind_2-1}
\frac{1}{(u_1+v_1+v_2)^{t_1}(v_2+u_1)^{t_2}u_1^{t_3}}\bigg].
\notag
\end{eqnarray}
When $r_1=s_1=s_2=1$, we get Eq.~(\mref{eq:simplesh}).
See Theorem~\mref{thm:pfprod} for the general formula.

In Section~\mref{sec:shfr}, we recall our general framework of double shuffle algebras and show that it encodes the shuffle product of \MZV fractions (Theorem~\mref{thm:pf}) through their integral representations. The explicit product formula of \MZV fractions is given in Section~\mref{sec:expf} where we also give some examples.

\section{The algebra of \MZV fractions}
\mlabel{sec:shfr}
In this section, we recall the general double shuffle framework in~\mcite{GX2} and apply it to give the shuffle product of \MZV fractions.

\subsection{Shuffle product of \MZV fractions} \mlabel{ss:pf}
Let $\ug$ be a set. Define the set of symbols
$$ \zsg{\ug}:= \{ \spair{r}{u}\ |\ r\in \ZZ_{\geq 1}, u\in U\}.$$
Let $M(\zsg{\ug})$ be the free monoid generated by $\zsg{\ug}$. Define the free abelian group
\begin{equation}
\calh(\zsg{\ug}): = \ZZ M(\zsg{\ug}). \mlabel{eq:pfalg}
\end{equation}
We will define a product on $\calh(\zsg{\ug})$ by transporting the shuffle product on another algebra.

Define the set of symbols
$$\ssg{\ug}=\{x_0\}\sqcup \{ x_u\ |\ u\in \ug\}$$
and let $M(\ssg{\ug})$ be the free monoid on $\ssg{\ug}$.
As usual~\mcite{GX2,Re}, define the shuffle algebra on $\ssg{\ug}$ to be the vector space
$$ \calh\shf(\ssg{\ug}): = \ZZ M(\ssg{\ug})$$
equipped with the shuffle product $\ssha$, namely
$$
(\alpha_1\vec{\alpha}') \ssha (\beta_1 \vec{\beta}')
= \alpha_1 (\vec{\alpha}' \ssha (\beta_1 \vec{\beta}'))
+ \beta_1 ((\alpha_1\vec{\alpha}')\ssha \vec{\beta}'),
\quad \alpha_1,\beta_1\in \ssg{\ug}, \vec{\alpha}',\vec{\beta}'\in M(\ssg{\ug}),
$$
with the initial condition $1\ssha \vec{\alpha}=\vec{\alpha} = \vec{\alpha} \ssha 1.$

Define the subalgebra
$$ \calh\shf\lone(\ssg{\ug}) = \ZZ \oplus (\oplus_{u\in \ug} \calh\shf(\ssg{\ug})x_u).
$$
Define a linear bijection
\begin{equation}
\rho: \calh\shf\lone(\ssg{\ug}) \to \calh(\zsg{\ug}), \quad
x_0^{r_1-1}x_{u_1}\cdots x_0^{r_k-1}x_{u_k} \mapsto
\spair{r_1,\cdots,r_k}{u_1,\cdots, u_k}. \mlabel{eq:shpfa}
\end{equation}
We then transport the shuffle product $\ssha$ on $\calh\shf(\ssg{\ug})$
to a product $\qsshab$ on $\calh(\zsg{\ug})$ via $\rho$, namely
\begin{equation}
\alpha \qsshab \beta = \rho(\rho^{-1}(\alpha) \ssha \rho^{-1}(\beta)).
\mlabel{eq:pfsh}
\end{equation}
Let $\calh\bsh(\zsg{\ug})$ denote the resulting algebra
$(\calh(\zsg{\ug}), \qsshab).$

Now let $\ug$ be a set
of variables and let $\ZZ(U)$ be the field of rational
functions in $U$. In other words, $\ZZ(U)$ is the field of fractions of $\ZZ[U]$.
Consider the $\ZZ$-submodule
\begin{equation}
 \PF(U):= \ZZ\{ \frakf\pfpair{s_1,\cdots,s_k}{u_1,\cdots,u_k}\ |\ s_i\geq 1, u_i\in U, 1\leq i\leq k, k\geq 0\},
\mlabel{eq:pfsp}
\end{equation}
where $\frakf\pfpair{s_1,\cdots,s_k}{u_1,\cdots,u_k}$ is defined in Eq.~(\mref{eq:pafr}).
The main result of this section is the following

\begin{theorem}
If $\ug$ be a set of variables, then the $\ZZ$-linear map
$$\calf: \calh(\zsg{\ug})\rightarrow \ZZ(U), \quad \calf\spair{\vec{s}}{\vec{u}}= \frakf\pfpair{\vec{s}}{\vec{u}}, \ \calf(1) = 1 $$
is a $\ZZ$-algebra homomorphism.
In particular, the $\ZZ$-submodule $\PF(U)$ of $\ZZ(U)$, as the image of $\calf$, is a $\ZZ$-subalgebra of $\ZZ(U)$.
\mlabel{thm:pf}
\end{theorem}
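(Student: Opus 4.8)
The plan is to deduce the homomorphism property from an integral (Laplace-type) representation of the \MZV fractions together with the classical shuffle identity for iterated integrals. Since $\calf$ is $\ZZ$-linear, it suffices to check $\calf(\alpha\qsshab\beta)=\calf(\alpha)\calf(\beta)$ on the monomial basis, i.e. for $\alpha=\rho(w)$ and $\beta=\rho(w')$ with $w,w'$ words in $\calh\shf\lone(\ssg{\ug})$. By the definition of $\qsshab$ in Eq.~(\mref{eq:pfsh}), this is equivalent to showing that the composite $\Phi:=\calf\circ\rho\colon \calh\shf\lone(\ssg{\ug})\to\ZZ(U)$ carries the shuffle product $\ssha$ to ordinary multiplication, that is $\Phi(w\ssha w')=\Phi(w)\Phi(w')$. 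First I would record that $w\ssha w'$ again lies in $\calh\shf\lone(\ssg{\ug})$, since every interleaving of two words ending in some $x_u$ again ends in some $x_u$; thus $\Phi$ is defined on the shuffle and this subalgebra is closed under $\ssha$.

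Next I would set up the integral representation. To a word $w=\ell_1\cdots\ell_n$ in $\ssg{\ug}$ I attach the weight $1$ to each letter $x_0$ and the factor $e^{-ut}$ to each letter $x_u$, and define
\[
 I(w):=\int_{0<t_1<\cdots<t_n<\infty}\ \prod_{j=1}^n g_{\ell_j}(t_j)\,dt_j, \qquad g_{x_0}(t)=1,\ \ g_{x_u}(t)=e^{-ut}.
\]
For $w=x_0^{r_1-1}x_{u_1}\cdots x_0^{r_k-1}x_{u_k}$ the substitution $s_j=t_j-t_{j-1}$ (with $t_0=0$) turns the integrand into $\prod_{j}e^{-V_j s_j}$, where $V_j=u_i+\cdots+u_k$ whenever $j$ lies in the block of $r_i$ consecutive positions ending at the $i$-th $x_u$-letter; integrating the now decoupled factors gives $\prod_{i=1}^k (u_i+\cdots+u_k)^{-r_i}$, which is exactly $\frakf\pfpair{r_1,\cdots,r_k}{u_1,\cdots,u_k}$. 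Hence $\Phi(w)=\calf(\rho(w))=I(w)$, and $\Phi(1)=I(\emptyset)=1$. Convergence holds precisely because $w$ ends in some $x_u$, so each $V_j$ is a nonempty sum of the positive real parameters $u_i$; this is where the restriction to $\calh\shf\lone(\ssg{\ug})$ is essential.

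Finally I would derive multiplicativity from Fubini. The product $I(w)\,I(w')$ is an integral over the product of two ordered simplices in disjoint variables $t_1<\cdots<t_m$ and $\tau_1<\cdots<\tau_l$. Up to a set of measure zero this product region is the disjoint union, over all interleavings (shuffles) of the two chains, of the ordered simplices in the $m+l$ merged variables; on each piece the integrand $\prod_j g_{\ell_j}(t_j)\prod_j g_{\ell'_j}(\tau_j)$ is exactly the integrand attached to the corresponding shuffled word. Summing over the pieces yields $I(w)\,I(w')=I(w\ssha w')$, the classical shuffle identity for iterated integrals specialized to our forms. Combining with $\Phi=I$ gives $\Phi(w)\Phi(w')=\Phi(w\ssha w')$, hence the homomorphism property of $\calf$; since $\PF(U)$ is the image of $\calf$ and contains $1=\calf(1)$, it is a $\ZZ$-subalgebra of $\ZZ(U)$.

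The main obstacle is the rigorous justification of this integral machinery. All identities above are first proved for real parameters $u_i>0$, where the integrals converge absolutely, and must then be transferred to identities of rational functions in $\ZZ(U)$; this is legitimate because both sides are rational and agree on a nonempty open subset of parameter space. The one genuinely combinatorial point---that the decomposition of the product simplex by total orderings matches the shuffle $w\ssha w'$ letter for letter, including the bookkeeping of which exponential factor is attached to which variable---is the heart of the argument, but it is exactly Chen's formula and requires only care, not new ideas.
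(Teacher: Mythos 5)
Your proposal is correct, and at its core it takes a genuinely different route from the paper. The outer skeleton is the same: both arguments represent the fractions by convergent integrals for real parameters in $\RR_+$, prove multiplicativity there, and transfer the identity to $\ZZ(U)$ by noting that a nonzero rational function cannot vanish on a nonempty open subset of real parameter space (your last step is exactly the paper's deduction of Theorem~\mref{thm:pf} from Corollary~\mref{co:real}); moreover your $I(w)$ is the paper's representation in Proposition~\mref{pp:frint} after unwinding the operators $I_0$, $I_b$ and reflecting $t_j\mapsto -t_j$. The genuine difference is how multiplicativity is proved. The paper never decomposes a product of simplices: it argues by induction on the weight, using the integration-by-parts (Rota--Baxter) identity Eq.~(\mref{eq:intI}) for the operators $I_\lambda$ together with the matching recursion Eq.~(\mref{eq:oper}) for the transported product $\qsshab$, which it quotes from \cite[Proposition~4.3]{GX2}, the two being glued by the intertwining relation $\Theta\circ P_b=I_b\circ\Theta$. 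You instead use the classical Chen/Fubini argument: the product of two ordered simplices is, up to a set of measure zero, the disjoint union of merged simplices indexed by interleavings, and this decomposition matches the recursive definition of $\ssha$. Your route is more self-contained --- it needs only the definition of $\qsshab$ via $\rho$, not the recursion imported from \mcite{GX2} --- at the price of measure-theoretic care, which you supply correctly: the integrands are nonnegative so Tonelli applies, convergence is exactly the condition that all words involved end in a letter $x_u$ (i.e., lie in $\calh\shf\lone(\ssg{\ug})$, a property preserved under shuffling), and the pieces are indexed by interleavings rather than by the resulting words, so multiplicities come out right. What the paper's route buys in exchange is that everything reduces to one-variable integrals of exponentials with no Fubini argument at all, and it exhibits the theorem as an instance of the general double-shuffle/Rota--Baxter framework of \mcite{GX2}, which is the conceptual point the paper wants to make.
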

The proof of this theorem will be given in Section~\mref{sec:intpf}. We first give a consequence of the theorem.

\begin{coro}
The multiplication of two \MZV fractions in $\PF(U)$ satisfies the shuffle relation:
\begin{equation} \mlabel{eq:sh-frac}
\frakf\pfpair{\vec{r}}{\vec{u}} \frakf\pfpair{\vec{s}}{\vec{v}}
= \frakf\big(\pfpair{\vec{r}}{\vec{u}}\qsshab \pfpair{\vec{s}}{\vec{v}} \big).
\end{equation}
Here $\qsshab$ is as defined in Eq.~(\mref{eq:pfsh}).
\mlabel{co:pfshf}
\end{coro}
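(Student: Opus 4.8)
The plan is to obtain this corollary as an immediate specialization of Theorem~\mref{thm:pf}. That theorem asserts that $\calf\colon\calh(\zsg{\ug})\to\ZZ(U)$ is a $\ZZ$-algebra homomorphism, which is to say that $\calf$ intertwines the transported shuffle product $\qsshab$ on $\calh(\zsg{\ug})$ with the ordinary multiplication of rational functions in $\ZZ(U)$:
\[
\calf(\alpha\qsshab\beta)=\calf(\alpha)\,\calf(\beta),\qquad \alpha,\beta\in\calh(\zsg{\ug}).
\]
Thus the whole task is to read this identity off on the distinguished monomial generators of $\calh(\zsg{\ug})$.

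The one point to pin down first is the notational identification between the symbols spanning $\calh(\zsg{\ug})$ and the fraction notation appearing in~(\mref{eq:sh-frac}). A basis element of $\calh(\zsg{\ug})$ is a word $\spair{r_1}{u_1}\cdots\spair{r_k}{u_k}$ in the generators $\zsg{\ug}$, which I abbreviate $\spair{\vec r}{\vec u}$; under $\calf$ it maps to the genuine rational function $\frakf\pfpair{\vec r}{\vec u}\in\PF(U)\subseteq\ZZ(U)$ by the defining rule of $\calf$. On the right-hand side of~(\mref{eq:sh-frac}), the expression $\pfpair{\vec r}{\vec u}\qsshab\pfpair{\vec s}{\vec v}$ is to be read as the $\qsshab$-product of these two symbols computed inside $\calh(\zsg{\ug})$, and the outer $\frakf(\cdots)$ is simply the value of $\calf$ on the resulting $\ZZ$-linear combination of words. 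With this dictionary fixed, I would substitute $\alpha=\spair{\vec r}{\vec u}$ and $\beta=\spair{\vec s}{\vec v}$ into the homomorphism identity: the right-hand side becomes $\calf\spair{\vec r}{\vec u}\cdot\calf\spair{\vec s}{\vec v}=\frakf\pfpair{\vec r}{\vec u}\,\frakf\pfpair{\vec s}{\vec v}$, the product of the two \MZV fractions in $\ZZ(U)$, while the left-hand side is $\calf\big(\spair{\vec r}{\vec u}\qsshab\spair{\vec s}{\vec v}\big)$, which is exactly the right-hand side of~(\mref{eq:sh-frac}).

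I do not anticipate any genuine obstacle here, since every substantive ingredient is already absorbed into Theorem~\mref{thm:pf}: the construction of $\qsshab$ via the linear bijection $\rho$ of~(\mref{eq:shpfa}) and, crucially, the verification that $\calf$ is multiplicative, which rests on the integral representation of the \MZV fractions and whose proof is deferred to Section~\mref{sec:intpf}. The corollary is nothing more than the restriction of that multiplicativity to the single generating fractions, and so reduces to the one-line substitution described above.
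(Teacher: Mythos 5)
Your proposal is correct and matches the paper's own reasoning: the paper presents this corollary as an immediate consequence of Theorem~\ref{thm:pf}, i.e.\ the multiplicativity $\calf(\alpha\qsshab\beta)=\calf(\alpha)\calf(\beta)$ evaluated on the monomial generators $\spair{\vec r}{\vec u}$ and $\spair{\vec s}{\vec v}$, which is exactly your one-line substitution. Your added clarification of the notational dictionary (symbols in $\calh(\zsg{\ug})$ versus actual fractions in $\ZZ(U)$, with $\frakf(\cdots)$ on the right-hand side meaning $\calf$ applied to the $\qsshab$-product) is consistent with the paper's intent.
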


\subsection{Integral representations of \MZV fractions}
\mlabel{sec:intpf} In Section~\mref{ss:intpf} we give integral
representations of \MZV fractions. We then use this integral representation to prove
Theorem~\mref{thm:pf}.

\subsubsection{Integral representation of \MZV fractions}
\mlabel{ss:intpf}
In preparation of our proof of Theorem~\mref{thm:pf}, we present an integral representation of \MZV fractions which is essentially the same as the well-known integral representation of \MZVs by Konsevich~\mcite{Ko}. For the sake of being self-contained and for later reference, we provide the notations and some details.

Define
\begin{equation}
\alga:=\RR \{ e^{bt} \ |\ b\geq 0 \}, \quad
\alga^+=\RR \{ e^{bt} \ | \ b >0 \}.
\mlabel{eq:A}
\end{equation}
Then $\alga$ and $\alga^+$ are closed under function multiplication and $\alga=\RR \oplus \alga^+$. We define the operator
$$I_0: \alga^+\rightarrow \alga, \quad f(t)\mapsto \int_{-\infty}^t
f(t_1)dt_1.$$ For any $\lambda>0$ we define the operator
$$ I_\lambda: \alga\rightarrow \alga , \quad
f(t) \mapsto \int_{-\infty} ^t f(t_1)e^{\lambda t_1}dt_1.$$ Then we
have the equations:
\begin{equation}
I_0(e^{b t})= \frac{1}{b}e^{b t}, \hskip 10pt b >0, \mlabel{eq:intI0}
\end{equation}
\begin{equation}
I_\lambda(e^{b t})= \frac{1}{b+\lambda} e^{(b+\lambda)t}, \hskip
10pt b\geq 0, \lambda > 0. \mlabel{eq:intIlam}
\end{equation} So
$I_0(\alga^+)\subseteq \alga^+$ and $I_\lambda(\alga)\subseteq
\alga^+$ for $\lambda >0$. By a direct computation using Eq.~(\mref{eq:intI0}) and (\mref{eq:intIlam}) we obtain
\begin{equation}
I_{\lambda_1}(h_1)I_{\lambda_2}(h_2)
=I_{\lambda_1}(h_1I_{\lambda_2}(h_2))
+I_{\lambda_2}(I_{\lambda_1}(h_1)h_2), \label{eq:intI}
\end{equation}
where  $\lambda_1,\lambda_2\in \RR_{\geq 0}$, $h_1$ is  in the
domain of $I_{\lambda_1}$ and $h_2$ is in the domain of
$I_{\lambda_2}$,

\begin{prop}
For any $\frakf\pfpair{s_1,\cdots,s_k}{b_1,\cdots,b_k}\in\RR$, we
have the integral representation
\begin{equation}
\frakf\pfpair{s_1,\cdots,s_k}{b_1,\cdots,b_k}e^{(b_1+\cdots+b_k)t} =
(I_0^{\circ (s_1-1)}\circ I_{b_1}\circ \cdots \circ I_0^{\circ
(s_k-1)} \circ I_{b_k})(1), \mlabel{eq:frintf}
\end{equation}
where $1:\RR\to \RR$ is the constant function.
In particular,
\begin{equation}
\frakf\pfpair{s_1,\cdots,s_k}{b_1,\cdots,b_k} = (I_0^{\circ
(s_1-1)}\circ I_{b_1}\circ\cdots\circ I_0^{\circ (s_k-1)}\circ
I_{b_k})(1)\big|_{t=0}. \mlabel{eq:frint}
\end{equation}
\mlabel{pp:frint}
\end{prop}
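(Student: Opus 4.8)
The plan is to establish Eq.~(\mref{eq:frintf}) by induction on the length $k$, peeling off the outermost block $(s_1,b_1)$, and then to deduce Eq.~(\mref{eq:frint}) for free by setting $t=0$, since the exponential factor $e^{(b_1+\cdots+b_k)t}$ specializes to $1$ there. Throughout I would use that $I_0$ and each $I_\lambda$ are $\RR$-linear, being defined by integration, so that scalar coefficients pass through them freely. I also note that the setup places us in the regime where every $b_i$ is positive: indeed $I_{b_i}$ requires its subscript to be strictly positive, and $I_0$ requires its argument $e^{(b_j+\cdots+b_k)t}$ to lie in $\alga^+$, so positivity of all $b_i$ (equivalently, of all partial sums $b_j+\cdots+b_k$) is exactly the condition under which the right-hand side of Eq.~(\mref{eq:frintf}) is defined and the left-hand fraction is a genuine positive real.

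For the base case I would take $k=0$: the composition is empty, the left side is $\frakf()\,e^{0\cdot t}=1$, and both sides reduce to the constant function $1$. (Equivalently one can begin at $k=1$, where $I_{b_1}(1)=\frac{1}{b_1}e^{b_1t}$ by Eq.~(\mref{eq:intIlam}), after which the $s_1-1$ copies of $I_0$ each contribute a factor $1/b_1$ by Eq.~(\mref{eq:intI0}), yielding $\frac{1}{b_1^{s_1}}e^{b_1t}$.)

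For the inductive step, assume Eq.~(\mref{eq:frintf}) holds for length $k-1$. Applying the induction hypothesis to the inner composition $I_0^{\circ(s_2-1)}\circ I_{b_2}\circ\cdots\circ I_0^{\circ(s_k-1)}\circ I_{b_k}$ evaluated at $1$ produces $\frakf\pfpair{s_2,\cdots,s_k}{b_2,\cdots,b_k}\,e^{(b_2+\cdots+b_k)t}$. Writing $c=b_2+\cdots+b_k$ and treating the scalar $C=\frakf\pfpair{s_2,\cdots,s_k}{b_2,\cdots,b_k}$ as a constant, I would then apply the two remaining outermost blocks: by Eq.~(\mref{eq:intIlam}), $I_{b_1}(C\,e^{ct})=\frac{C}{b_1+c}\,e^{(b_1+c)t}$, and the subsequent $s_1-1$ applications of $I_0$ multiply by $(b_1+c)^{-(s_1-1)}$ via Eq.~(\mref{eq:intI0}). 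Since $b_1+c=b_1+\cdots+b_k$ and $C\cdot(b_1+\cdots+b_k)^{-s_1}=\frakf\pfpair{s_1,\cdots,s_k}{b_1,\cdots,b_k}$ directly from the definition in Eq.~(\mref{eq:pafr}), the outcome is exactly $\frakf\pfpair{s_1,\cdots,s_k}{b_1,\cdots,b_k}\,e^{(b_1+\cdots+b_k)t}$, which closes the induction.

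The calculation is routine; the only points demanding care are the bookkeeping of composition order—the rightmost operator $I_{b_k}$ acts first, so the induction must peel off the outermost (left-most, last-applied) block $(s_1,b_1)$ and invoke the hypothesis on the remaining inner chain—and the check that each operator in the chain is actually defined, i.e.\ that the successive exponents remain positive, which is precisely where the standing positivity of the $b_i$ is used. I expect no genuine obstacle beyond keeping these conventions straight.
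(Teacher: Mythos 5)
Your proof is correct and follows essentially the same route as the paper's: both arguments are inductions that peel operators off the left of the composition and evaluate them via Eqs.~(\mref{eq:intI0}) and~(\mref{eq:intIlam}). The only (cosmetic) difference is the induction parameter: you induct on the depth $k$ and absorb the whole block $I_0^{\circ(s_1-1)}\circ I_{b_1}$ in a single step, whereas the paper inducts on the weight $|\vec{s}|=s_1+\cdots+s_k$ and removes one operator at a time, splitting into the cases $s_1=1$ and $s_1>1$.
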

\begin{proof} We only need to prove Eq.~(\mref{eq:frintf}) for which we use the induction on
$|\vec{s}|=s_1+\cdots+s_k$. If $|\vec{s}|=1$, then $k=1$ and
$s_1=1$. By Eq.~(\mref{eq:intIlam}) the right hand side of
Eq.~(\mref{eq:frintf}) is $I_{b_1}(1)=\frac{e^{b_1t}}{u_1}$, which
is equal to the left hand side. Let $n$ be a positive integer $\geq
2$. Assume that Eq.~(\mref{eq:frintf}) holds for any $\vec{s}$ with
$|\vec{s}|<n$. Now assume that $|\vec{s}|=n$. If $s_1=1$, then
$k\geq 2$. In this case by the induction hypothesis and Eq.~(\mref{eq:intIlam}) the
right hand side of Eq.~(\mref{eq:frintf}) is equal to
$$I_{b_1}(\frakf\pfpair{s_2,\cdots, s_k}{b_2,\cdots,
b_k}e^{(b_2+\cdots+b_k)t})=
\frakf\pfpair{s_2,\cdots, s_k}{b_2,\cdots,
b_k} I_{b_1}(e^{(b_2+\cdots+b_k)t})=
\frac{1}{b_1+\cdots+
b_k}\frakf\pfpair{s_2,\cdots, s_k}{b_2,\cdots,
b_k}e^{(b_1+\cdots+b_k)t}$$ which coincides with the left hand side.
The argument for $s_1>1$ is similar by using Eq.~(\mref{eq:intI0})
instead of Eq.~(\mref{eq:intIlam}).
\end{proof}

\subsubsection{The proof of Theorem~\mref{thm:pf}}

We now take $U=\RR_+$ in Section~\mref{ss:pf} and define the set $\zsg{\RR_+}$ and the algebra $\calh(\zsg{\RR_+})$.

\begin{prop} The $\RR$-linear map
$$\Theta: \RR\ot _\ZZ \calh(\zsg{\RR_+})\rightarrow \alga, \quad \spair{s_1,\cdots, s_k}{b_1,\cdots, b_k}
\mapsto \frakf\pfpair{s_1,\cdots, s_k}{b_1,\cdots,
b_k}e^{(b_1+\cdots +b_k)t}, \ 1 \mapsto 1
$$ is an $\RR$-algebra homomorphism. \mlabel{pp:realfun}
\end{prop}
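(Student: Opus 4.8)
The plan is to use the fact that the product $\qsshab$ on $\calh(\zsg{\RR_+})$ is, by its definition in Eq.~(\mref{eq:pfsh}), nothing but the shuffle product $\ssha$ on $\calh\shf\lone(\ssg{\RR_+})$ transported through the bijection $\rho$ of Eq.~(\mref{eq:shpfa}). Consequently $\Theta$ is an $\RR$-algebra homomorphism if and only if the composite $\Phi:=\Theta\circ\rho$ is a homomorphism from $(\calh\shf\lone(\ssg{\RR_+}),\ssha)$ to $\alga$ under function multiplication. The content of Proposition~\mref{pp:frint}, in the form of Eq.~(\mref{eq:frintf}), is precisely that $\Phi$ has a transparent description as an iterated-integral map: setting $I_{x_0}:=I_0$ and $I_{x_b}:=I_b$ for $b\in\RR_+$, and $I_w:=I_{y_1}\circ\cdots\circ I_{y_n}$ for a word $w=y_1\cdots y_n$, one has $\Phi(w)=I_w(1)$ with $\Phi(1)=1$ the constant function. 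In particular $\Phi$ obeys the recursion $\Phi(y_1 w')=I_{y_1}(\Phi(w'))$.

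It then suffices to prove $\Phi(u\ssha v)=\Phi(u)\Phi(v)$ for all words $u,v$, which I would establish by induction on $|u|+|v|$. If $u$ or $v$ is empty the identity is immediate since $\Phi(1)=1$ and $1\ssha v=v$. Otherwise write $u=y_1u'$, $v=z_1v'$ and invoke the shuffle recursion $u\ssha v=y_1(u'\ssha v)+z_1(u\ssha v')$. Combining the $\Phi$-recursion with the integration-by-parts identity Eq.~(\mref{eq:intI}), applied with $\lambda_1,\lambda_2$ the subscripts of $I_{y_1},I_{z_1}$, gives
$$\Phi(u)\Phi(v)=I_{y_1}(\Phi(u'))\,I_{z_1}(\Phi(v'))=I_{y_1}\big(\Phi(u')\,\Phi(v)\big)+I_{z_1}\big(\Phi(u)\,\Phi(v')\big).$$
The induction hypothesis replaces $\Phi(u')\Phi(v)$ by $\Phi(u'\ssha v)$ and $\Phi(u)\Phi(v')$ by $\Phi(u\ssha v')$, and a final use of the $\Phi$-recursion identifies $I_{y_1}(\Phi(u'\ssha v))+I_{z_1}(\Phi(u\ssha v'))$ with $\Phi(u\ssha v)$, closing the induction. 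Multiplicativity of $\Phi$, and hence of $\Theta$, then follows by $\RR$-bilinearity.

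The step I expect to be the real obstacle is not the algebra above but the bookkeeping of domains, which is also what forces the use of the subalgebra $\calh\shf\lone(\ssg{\RR_+})$ rather than the full shuffle algebra. Recall that $I_0$ is defined only on $\alga^+$, whereas $I_\lambda$ with $\lambda>0$ is defined on all of $\alga$; in particular $I_0(1)$ is meaningless since $1\notin\alga^+$. The saving observation is that a nonempty word lies in $\calh\shf\lone(\ssg{\RR_+})$ exactly when it ends in a letter $x_b$ with $b>0$; call such words admissible. For an admissible word the innermost integral is $I_b(1)=\frac{1}{b}e^{bt}\in\alga^+$, and since $I_0(\alga^+)\subseteq\alga^+$ and $I_\lambda(\alga)\subseteq\alga^+$, it follows that $\Phi$ sends every admissible word into $\alga^+$. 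As $\alga^+$ is closed under multiplication, the products $\Phi(u')\Phi(v)$ and $\Phi(u)\Phi(v')$ again lie in $\alga^+$, so the outer applications of $I_0$ — which occur precisely when $y_1=x_0$, in which case $u$ has length $\geq2$ and $u'$ is itself a nonempty admissible word — are legitimate and the hypotheses of Eq.~(\mref{eq:intI}) are met. Finally one checks that $\calh\shf\lone(\ssg{\RR_+})$ is closed under $\ssha$: the last letter of each term of a shuffle is the last letter of one of the two factors, hence again some $x_b$, so the induction never leaves the admissible class. Granting these domain verifications, the proposition follows.
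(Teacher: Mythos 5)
Your proof is correct and is essentially the paper's own argument transported through the bijection $\rho$: the paper runs the same induction on the total degree on the $\calh(\zsg{\RR_+})$ side, where your peeling off of the first letters $y_1,z_1$ appears as writing $\xi_1=P_a(\xi_1')$, $\xi_2=P_b(\xi_2')$ for the prepending operators $P_0,P_b$, your shuffle recursion appears as the identity of Eq.~(\mref{eq:oper}) cited from \cite{GX2}, your $\Phi$-recursion $\Phi(y_1w')=I_{y_1}(\Phi(w'))$ appears as the intertwining relation $\Theta\circ P_b=I_b\circ\Theta$ of Eq.~(\mref{eq:comm}), and Eq.~(\mref{eq:intI}) plays the identical role in both. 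Your explicit bookkeeping of domains (admissible words, $I_0$ being defined only on $\alga^+$, closure of $\calh\shf\lone(\ssg{\RR_+})$ under $\ssha$) makes precise a point the paper leaves implicit, but it does not change the approach.
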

\begin{proof}
Define
$$
\begin{aligned}
P_0:& \calh^+(\zsg{\RR_+})\rightarrow \calh(\zsg{\RR_+}), \quad
P_0(\spair{s_1,s_2,\cdots ,s_k}{b_1,b_2,\cdots,b_k})
= \spair{s_1+1, s_2, \cdots, s_k}{\;\;\;\;\;b_1,b_2,\cdots,b_k}, \\
P_b:& \calh(\zsg{\RR_+})\rightarrow \calh(\zsg{\RR_+}), \quad
P_b(\spair{s_1,\cdots, s_k}{b_1,\cdots, b_k})= \spair{ 1,
 s_1, \cdots, s_k}{b, b_1, \cdots, b_k }, \quad
P_b(1)=\spair{1}{b}
\end{aligned}
$$
and take their scalar extensions to $\RR$.
We show that
\begin{equation}
\Theta\circ P_b=I_b\circ \Theta, \quad b\geq 0. \mlabel{eq:comm}
\end{equation}
For $b=0$ we have
$$\begin{aligned}
& \Theta\circ P_0(\spair{s_1,s_2\cdots, s_k}{b_1,b_2\cdots, b_k})=
\Theta(\spair{s_1+1,s_2,\cdots, s_k}{b_1,b_2,\cdots, b_k}) =
\frakf\pfpair{s_1+1,s_2,\cdots, s_k}{b_1, b_2, \cdots,
b_k}e^{(1+s_1+\cdots+s_k)t}
\\
& = I_0\Big((I_0^{\circ s_1-1}\circ I_{b_1}\circ \cdots \circ
I_0^{\circ (s_k-1)} \circ I_{b_k})(1)\Big) =
I_0(\Theta(\spair{s_1,s_2\cdots, s_k}{b_1,b_2\cdots, b_k})),
\end{aligned}$$
where we have used Eq.~(\mref{eq:frintf}) in the last two equations. The argument for $b>0$ is similar.

From \cite[Proposition~4.3]{GX2} we obtain
\begin{equation}
P_a(\xi_1)\qsshab P_b(\xi_2)=P_a(\xi_1\qsshab
P_b(\xi_2))+P_b(P_a(\xi_1)\qsshab \xi_2),
\mlabel{eq:oper}
\end{equation} where
$\xi_1$ is in the domain of $P_a$ and $\xi_2$ is in the domain of
$P_b$. Now we prove that $$\Theta(\xi_1\qsshab \xi_2)
=\Theta(\xi_1)\Theta(\xi_2)$$ for $\xi_1,\xi_2$ in the free monoid  $M(\zsg{\RR_{+}})$ generated by $\zsg{\RR_+}$. This is done
by induction on $|\xi_1|+|\xi_2|$. Here
$$|\xi_1|=\left\{
\begin{array}{ll} 1 & \text{ if } \xi_1=1,
\\
r_1 + \cdots + r_\ell, & \text{ if } \xi_1=\spair{r_1,\cdots,
r_\ell}{a_1,\cdots, a_\ell}.
 \end{array}\right.$$ If $|\xi_1|=0$ or $|\xi_2|=0$, then there is
nothing to prove. So we assume that $|\xi_1|\geq 1$ and $|\xi_2|\geq
1$. Then we can write $\xi_1=P_a(\xi'_1)$ for some $a\in \RR_{\geq 0}$
and $\xi'_1\in M(\zsg{\RR_+})$. Similarly we can write
$\xi_2=P_b(\xi'_2)$ for some $b\in \RR_{\geq 0}$ and $\xi'_2\in
M(\zsg{\RR_+})$. Then
$$\begin{aligned}
\Theta(\xi_1\qsshab \xi_2) &= \Theta(P_a(\xi'_1)\qsshab P_b(\xi'_2))
\\
&= \Theta(P_{a}( \xi'_1\qsshab P_b(\xi'_2)) ) +\Theta(
P_b(P_a(\xi'_1)\qsshab \xi'_2)) \qquad (\text{by
Eq.~(\mref{eq:oper})}
\\
&=I_a(\Theta(\xi'_1\qsshab P_b(\xi'_2)))+ I_b(\Theta
(P_a(\xi'_1)\qsshab \xi'_2)) \qquad (\text{by Eq.~(\mref{eq:comm})})
\\
&=I_a(\Theta(\xi'_1)\Theta(P_b(\xi'_2)))+ I_b(\Theta
(P_a(\xi'_1))\Theta(\xi'_2)) \qquad (\text{by induction assumption})
\\
&=I_a(\Theta(\xi'_1)I_b(\Theta(\xi'_2)))+ I_b(I_a
(\Theta(\xi'_1))\Theta(\xi'_2)) \qquad (\text{by
Eq.~(\mref{eq:comm})}) \\
&= I_a(\Theta(\xi'_1))I_b(\Theta(\xi'_2))  \qquad (\text{by Eq.~(\mref{eq:intI})}) \\
&= \Theta(P_a(\xi'_1))\Theta(P_b(\xi'_2)) \qquad (\text{by
Eq.~(\mref{eq:comm})}).
\end{aligned}$$ This completes the induction.
\end{proof}
Taking $t=0$ in Proposition~\mref{pp:realfun}, we obtain
\begin{coro} The $\RR$-linear map
$$\Theta: \calh(\zsg{\RR_+})\rightarrow \RR , \quad \spair{s_1,\cdots, s_k}{b_1,\cdots, b_k}
\mapsto \frakf\pfpair{s_1,\cdots, s_k}{b_1,\cdots, b_k}, \ 1 \mapsto
1
$$ is an $\RR$-algebra homomorphism. \mlabel{co:real}
\end{coro}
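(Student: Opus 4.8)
The plan is to deduce this corollary from Proposition~\mref{pp:realfun} by specializing the time variable to $t = 0$. Since Proposition~\mref{pp:realfun} already produces an $\RR$-algebra homomorphism $\Theta: \RR\ot_\ZZ \calh(\zsg{\RR_+}) \to \alga$ whose values are the \MZV fractions dressed by an exponential factor, all that remains is to strip off that factor by evaluating at the origin.

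First I would verify that evaluation at zero,
$$\mrm{ev}_0: \alga \to \RR, \qquad f(t) \mapsto f(0),$$
is an $\RR$-algebra homomorphism. This is immediate from the definition $\alga = \RR\{e^{bt} \mid b\geq 0\}$: the elements of $\alga$ are genuine real-valued functions of $t$ under pointwise addition and multiplication, and evaluation at a fixed point is $\RR$-linear, multiplicative, and sends the constant function $1$ to $1$. On the spanning functions it acts by $\mrm{ev}_0(e^{bt}) = e^{b\cdot 0} = 1$.

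Next I would form the composition $\mrm{ev}_0 \circ \Theta$. A composition of $\RR$-algebra homomorphisms is again one, so this is an $\RR$-algebra homomorphism $\RR\ot_\ZZ \calh(\zsg{\RR_+}) \to \RR$. Evaluating on a generator via the formula of Proposition~\mref{pp:realfun} gives
$$(\mrm{ev}_0 \circ \Theta)\big(\spair{s_1,\cdots,s_k}{b_1,\cdots,b_k}\big) = \frakf\pfpair{s_1,\cdots,s_k}{b_1,\cdots,b_k}\, e^{(b_1+\cdots+b_k)\cdot 0} = \frakf\pfpair{s_1,\cdots,s_k}{b_1,\cdots,b_k},$$
and $(\mrm{ev}_0 \circ \Theta)(1) = 1$. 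Restricting along the inclusion $\calh(\zsg{\RR_+}) \hookrightarrow \RR\ot_\ZZ \calh(\zsg{\RR_+})$, $\xi \mapsto 1\ot\xi$, this is exactly the map $\Theta$ of the corollary, which is therefore an algebra homomorphism.

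There is essentially no obstacle: the whole content is carried by Proposition~\mref{pp:realfun}, and the only point needing (trivial) care is that the exponential factor $e^{(b_1+\cdots+b_k)t}$ attached to each fraction collapses to $1$ precisely at $t=0$, so that the specialized map lands in $\RR$ and agrees with $\frakf\pfpair{\vec{s}}{\vec{b}}$ on the nose.
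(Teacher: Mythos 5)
Your proof is correct and is exactly the paper's approach: the paper disposes of this corollary with the single line ``Taking $t=0$ in Proposition~\mref{pp:realfun}, we obtain,'' and your argument merely makes explicit the routine facts behind that line, namely that evaluation at $t=0$ is an $\RR$-algebra homomorphism $\alga\to\RR$ sending each factor $e^{(b_1+\cdots+b_k)t}$ to $1$, so that composing it with the homomorphism of Proposition~\mref{pp:realfun} yields the stated map.
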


Based on this corollary we can now prove Theorem \mref{thm:pf}.
\medskip

\noindent{\it Proof of Theorem \mref{thm:pf}.} Let
$\spair{\vec{r}}{\vec{v}}\in \zsg{\ug}^k$ and
$\spair{\vec{s}}{\vec{u}}\in\zsg{\ug}^\ell$.
We have to prove the equation
\begin{equation}
\calf(\spair{\vec{r}}{\vec{v}})\calf(\spair{\vec{s}}{\vec{u}})=
\calf(\spair{\vec{r}}{\vec{v}}\qsshab\spair{\vec{s}}{\vec{u}}).
\mlabel{eq:al}
\end{equation}
Both sides of this equation are rational functions in $U$.
Since the zero set of a nonzero rational function does not contain any non-empty open subset in $\RR^{k+\ell}$ while, by Corollary~\mref{co:real}, the above equation holds when the variables $\vec{u}$ and $\vec{v}$ take values in
$\RR_+$, the equation has been proved. \qed

\section{Product formula of \MZV fractions}
\mlabel{sec:expf}

We now apply Theorem~\mref{thm:pf} and the explicit shuffle product formula obtained in~\mcite{GX2} to give an explicit product formula of \MZV fractions. We will also give some examples.

We need to recall some notations
to give this formula. For positive integers $k$ and $\ell$, denote
$[k]=\{1,\cdots,k\}$ and $[k+1,k+\ell]=\{k+1,\cdots,k+\ell\}.$
Define
\begin{equation}
\indI_{k,\ell}=\left \{(\varphi,\psi)\ \Big|\ \begin{array}{l}
\varphi: [k]\to [k+\ell], \psi: [\ell]\to [k+\ell]
\text{ are order preserving } \\
\text{ injective maps and } \im(\varphi)\cup\im (\psi)=[k+\ell] \end{array} \right
\} \mlabel{eq:ind}
\end{equation}
Let $\vec{u}\in\ug^k$, $\vec{v}\in\ug^\ell$ and
$(\varphi,\psi)\in\indI_{k,\ell}$. We define
$\vec{u}\ssha_{(\varphi,\psi)}\vec{v}$ to be the vector whose $i$th
component is
\begin{equation}
(\vec{u}\ssha_{(\varphi,\psi)} \vec{v})_i :=\left\{\begin{array}{ll}
u_j & \text{if } i=\varphi(j), \\ v_j & \text{if }
i=\psi(j),\end{array}\right. \quad 1\leq i\leq k+\ell.
\mlabel{eq:mulind}
\end{equation}

Let
$\vec{\rind}=(\rind_1,\cdots, \rind_k)\in\ZZ_{\geq 1}^k$, $\vec{\sind}=(\sind_1,\cdots,\sind_\ell)\in \ZZ_{\geq 1}^{\ell}$ and  $\vec{\tind}=(\tind_1,\cdots, \tind_{k+\ell})\in \ZZ_{\geq 1}^{k+\ell}$
with $|\vec{\rind}|+|\vec{\sind}|=|\vec{\tind}|$. Here
$|\vec{\rind}|=\rind_1+\cdots +\rind_k$ and similarly for $|\vec{\sind}|$ and $|\vec{\tind}|$.
Denote $R_i=r_1+\cdots +r_i$ for $i\in [k]$, $S_i=s_1+\cdots +s_i$ for $i\in [\ell]$ and $T_i=t_1+\cdots+t_i$ for $i\in [k+\ell]$.
For $i\in [k+\ell]$, define
\begin{equation}
h_{(\varphi,\psi),i}=h_{(\varphi,\psi),(\vec{\rind},\vec{\sind}),i}
=
       \left\{
              \begin{array}{ll} \rind_{j} & \text{ if } i=\varphi(j)
                                 \\
                                \sind_{j} & \text{ if } i=\psi(j)
              \end{array}
              \right.
              = r_{\varphi^{-1}(i)}s_{\psi^{-1}(i)},
\mlabel{eq:h0}
\end{equation}
with the convention that $r_\emptyset =s_\emptyset =1.$

With these notations, we define
\begin{equation}
c_{\vec{\rind},\vec{\sind}}^{\vec{\tind},(\varphi,\psi)}(i) =\left\{
\begin{array}{ll}
\binc{\tind_i-1}{h_{(\varphi,\psi),i}-1} &
    \text{if } i=1  \text{ or } \text{if }
i-1,i \in \im(\varphi) \text{ or if }
i-1,i \in \im(\psi),
\vspace{.2cm}
\\ \vspace{.2cm}
\begin{array}{l}
\binc{\tind_i-1} {T_i-R_{|\varphi^{-1}([i])|}-S_{|\psi^{-1}([i])|}}\\
= \binc{\tind_{i}-1}{\sum\limits_{j=1}^{i} \tind_j
-\sum\limits_{j=1}^{i} h_{(\varphi,\psi),j}}
\end{array} & \text{ otherwise}.
\end{array}
\right. \mlabel{eq:coef-re-def1}
\end{equation}

The following theorem is proved in~\mcite{GX2}.
\begin{theorem}
{\bf (\cite[Theorem 2.1]{GX2}}
Let $\ug$ be a countably infinite set and let $\calh\bsh(\zsg{\ug})=(\calh(\zsg{\ug}), \qsshab)$ be as defined by Eq.~(\mref{eq:pfsh}).
Then for $\spair{\vec{\rind}}{\vec{u}}\in \zsg{\ug}^k$ and
$\spair{\vec{\sind}}{\vec{v}}\in\zsg{\ug}^\ell$ in $\calh\bsh(\zsg{\ug})$, we have
\begin{equation}
  \spair{\vec{\rind}}{\vec{u}}\qsshab \spair{\vec{\sind}}{\vec{v}}
  =
\sum_{\tiny\begin{array}{c} (\varphi,\psi)\in \indI_{k,\ell}\\
\vec{\tind}\in \ZZ_{\geq 1}^{k+\ell},
|\vec{\tind}|=|\vec{\rind}|+|\vec{\sind}| \end{array}}
\bigg(\prod_{i=1}^{k+\ell}c_{\vec{\rind},\vec{\sind}}^{\vec{\tind},
(\varphi,\psi)}(i)\bigg)
\spair{\vec{\tind}}{\vec{u}\ssha_{(\varphi,\psi)}\vec{v}}, \mlabel{eqn:maincoef}
\end{equation}
where $c_{\vec{\rind},\vec{\sind}}^{\vec{\tind}, (\varphi,\psi)}(i)$
is given in Eq.~(\mref{eq:coef-re-def1}) and
$\vec{u}\ssha_{(\varphi,\psi)}\vec{v}$ is given in
Eq.~(\mref{eq:mulind}). \mlabel{thm:qshsh}
\end{theorem}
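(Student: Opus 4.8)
The plan is to unwind the definition of $\qsshab$ in Eq.~(\mref{eq:pfsh}) and reduce the whole computation to the ordinary shuffle product of two words, which can be analyzed combinatorially. Applying $\rho^{-1}$ from Eq.~(\mref{eq:shpfa}), I write
$$
\rho^{-1}\spair{\vec{\rind}}{\vec{u}}=x_0^{\rind_1-1}x_{u_1}\cdots x_0^{\rind_k-1}x_{u_k}=:\alpha,\qquad
\rho^{-1}\spair{\vec{\sind}}{\vec{v}}=x_0^{\sind_1-1}x_{v_1}\cdots x_0^{\sind_\ell-1}x_{v_\ell}=:\beta,
$$
so that $\spair{\vec{\rind}}{\vec{u}}\qsshab\spair{\vec{\sind}}{\vec{v}}=\rho(\alpha\ssha\beta)$. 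Since $\alpha$ and $\beta$ each end in some $x_u$ with $u\in\ug$, every word in $\alpha\ssha\beta$ does too, so $\alpha\ssha\beta\in\calh\shf\lone(\ssg{\ug})$ and $\rho$ may be applied term by term. Thus it suffices, for each monomial $x_0^{\tind_1-1}x_{w_1}\cdots x_0^{\tind_{k+\ell}-1}x_{w_{k+\ell}}$ (whose $\rho$-image is $\spair{\vec{\tind}}{\vec{w}}$), to compute its coefficient in $\alpha\ssha\beta$.

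Recall that the coefficient of a word $w$ in $\alpha\ssha\beta$ is the number of ways to attribute each letter of $w$ to $\alpha$ or to $\beta$ so that the $\alpha$-letters, read in order, spell $\alpha$ and the $\beta$-letters spell $\beta$. The $k+\ell$ marked letters of $w$ must be, in order, a shuffle of $x_{u_1},\dots,x_{u_k}$ (forced to $\alpha$) with $x_{v_1},\dots,x_{v_\ell}$ (forced to $\beta$); recording by $\varphi(j)$ and $\psi(j')$ the slots occupied by $x_{u_j}$ and $x_{v_{j'}}$ produces exactly a pair $(\varphi,\psi)\in\indI_{k,\ell}$, and the variable vector $\vec{w}$ is $\vec{u}\ssha_{(\varphi,\psi)}\vec{v}$ of Eq.~(\mref{eq:mulind}). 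Because the product is built functorially in $\ug$ from $\rho$ and $\ssha$ and $\ug$ is countably infinite, I may assume the $u_i,v_j$ pairwise distinct, the general case following by specializing $\ug$; then distinct $(\varphi,\psi)$ give distinct targets, matching the grouping on the right-hand side of Eq.~(\mref{eqn:maincoef}). It remains to count the attributions of the $x_0$'s for fixed $(\varphi,\psi)$ and fixed $\vec{\tind}$.

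The crux is that this count is a single product, not a sum, because the $x_0$-distribution is forced. Write $a_i$ (resp. $b_i$) for the number of $x_0$'s attributed to $\alpha$ (resp. $\beta$) among the $\tind_i-1$ copies in the $i$th gap, the block of $x_0$'s immediately preceding $x_{w_i}$, so $a_i+b_i=\tind_i-1$, and set $\hat a_i=\sum_{m\le i}a_m$, $\hat b_i=\sum_{m\le i}b_m$. Reading off $\alpha$ forces $\hat a_{\varphi(j)}=R_j-j$ for every $j\in[k]$, reading off $\beta$ forces $\hat b_{\psi(j')}=S_{j'}-j'$ for every $j'\in[\ell]$, while the total $x_0$-count gives $\hat a_i+\hat b_i=T_i-i$ for all $i$. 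Since each position $i$ lies in the image of exactly one of $\varphi,\psi$, exactly one of $\hat a_i,\hat b_i$ is pinned by a run constraint and the displayed identity determines the other; hence all $\hat a_i,\hat b_i$, and with them all increments $a_i=\hat a_i-\hat a_{i-1}$, $b_i=\hat b_i-\hat b_{i-1}$ (with $\hat a_0=\hat b_0=0$), are uniquely determined by $(\varphi,\psi)$ and $\vec{\tind}$. The global relation $|\vec{\tind}|=|\vec{\rind}|+|\vec{\sind}|$ is precisely the compatibility making the two families consistent. Consequently the number of attributions is the single product $\prod_{i=1}^{k+\ell}\binc{\tind_i-1}{a_i}$, with the convention that a binomial vanishes when its lower index is negative or exceeds the upper.

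Finally I match $\binc{\tind_i-1}{a_i}$ with $c_{\vec{\rind},\vec{\sind}}^{\vec{\tind},(\varphi,\psi)}(i)$ of Eq.~(\mref{eq:coef-re-def1}). If $i=1$, or if $x_{w_{i-1}}$ and $x_{w_i}$ come from the same word, then the run of $x_0$'s preceding $x_{w_i}$ in that word occupies only the $i$th gap, so $a_i=\rind_{\varphi^{-1}(i)}-1$ (resp. $b_i=\sind_{\psi^{-1}(i)}-1$); using $\binc{n}{a}=\binc{n}{n-a}$ this equals $\binc{\tind_i-1}{h_{(\varphi,\psi),i}-1}$, the first branch. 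If $x_{w_{i-1}}$ and $x_{w_i}$ come from different words, substituting the pinned cumulative value into $a_i=\hat a_i-\hat a_{i-1}$ (or $b_i=\hat b_i-\hat b_{i-1}$) yields lower index $T_i-R_{|\varphi^{-1}([i])|}-S_{|\psi^{-1}([i])|}$, the second branch. Collecting over all $(\varphi,\psi)\in\indI_{k,\ell}$ and all admissible $\vec{\tind}$ gives Eq.~(\mref{eqn:maincoef}). I expect the main difficulty to be not any single estimate but the case bookkeeping of this last step: through both pinning families and the boundary conventions $r_\emptyset=s_\emptyset=1$ and $\hat a_0=\hat b_0=0$, one must check that the forced increment $a_i$ reproduces $c(i)$ in every configuration of adjacent labels, including the tails past the last $\alpha$- or $\beta$-marked letter. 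A cleaner but less transparent alternative is induction on $|\vec{\rind}|+|\vec{\sind}|$ via the transported recursion Eq.~(\mref{eq:oper}), verifying that the coefficients $c(i)$ satisfy the resulting Pascal-type identity.
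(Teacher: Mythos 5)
Your proof is correct, but there is no internal proof to compare it against: the paper does not prove Theorem~\mref{thm:qshsh} at all, it imports it verbatim from \cite[Theorem~2.1]{GX2} (``The following theorem is proved in [GX2]''). So your argument is necessarily a different route --- a self-contained combinatorial proof where the paper has only a citation --- and its key steps check out. Pulling back through $\rho$ of Eq.~(\mref{eq:shpfa}) reduces the claim to computing the coefficient of $x_0^{t_1-1}x_{w_1}\cdots x_0^{t_{k+\ell}-1}x_{w_{k+\ell}}$ in $\alpha\ssha\beta$; attributions of the marked letters are classified exactly by $\indI_{k,\ell}$ (the images of $\varphi$ and $\psi$ are automatically disjoint, so each position is pinned by exactly one run constraint); and your observation that the partial sums $\hat a_i,\hat b_i$ are then all forced, one by a run constraint and the other by $\hat a_i+\hat b_i=T_i-i$, is the right reason the count collapses to the single product $\prod_i \binc{t_i-1}{a_i}$ rather than a sum. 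I verified the two delicate points: first, when all forced $a_i,b_i\geq 0$, the ``no trailing $x_0$'' conditions follow automatically from $|\vec{t}|=|\vec{r}|+|\vec{s}|$ by monotonicity of the partial sums, and when some forced $a_i$ is out of range the vanishing binomial correctly reports zero attributions; second, in the mixed case one gets $(t_i-1)-a_i=T_i-R_{|\varphi^{-1}([i])|}-S_{|\psi^{-1}([i])|}$, which together with $\binc{n}{a}=\binc{n}{n-a}$ reproduces both branches of Eq.~(\mref{eq:coef-re-def1}). The reduction to pairwise distinct variables is also legitimate, since the relabeling $x_u\mapsto x_{f(u)}$ (fixing $x_0$) is a $\ssha$-homomorphism commuting with $\rho$. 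As for what each approach buys: yours makes the result self-contained and exhibits the coefficients $c_{\vec{r},\vec{s}}^{\vec{t},(\varphi,\psi)}(i)$ as literal shuffle multiplicities, while the paper's citation keeps it short; note also that the recursion Eq.~(\mref{eq:oper}), which this paper does borrow from [GX2] to prove Proposition~\mref{pp:realfun}, is precisely the inductive alternative you sketch in your closing sentence, so that variant would align most closely with the machinery the paper actually develops.
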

Then by Corollary~\mref{co:pfshf}, we have
\begin{theorem} With notations as in Theorem~\mref{thm:qshsh}, we have
\begin{equation}
  \frakf \pfpair{\vec{\rind}}{\vec{u}}
  \frakf \spair{\vec{\sind}}{\vec{v}})
  =
\sum_{\tiny\begin{array}{c} (\varphi,\psi)\in \indI_{k,\ell}\\
\vec{\tind}\in \ZZ_{\geq 1}^{k+\ell},
|\vec{\tind}|=|\vec{\rind}|+|\vec{\sind}| \end{array}}
\bigg(\prod_{i=1}^{k+\ell}c_{\vec{\rind},\vec{\sind}}^{\vec{\tind},
(\varphi,\psi)}(i)\bigg)
\frakf \pfpair{\vec{\tind}}{\vec{u} \ssha_{(\varphi,\psi)}\vec{v}}.
\mlabel{eq:pfprod}
\end{equation}
\mlabel{thm:pfprod}
\end{theorem}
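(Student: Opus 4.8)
The plan is to deduce Eq.~(\mref{eq:pfprod}) by transporting the abstract product formula of Theorem~\mref{thm:qshsh} across the algebra homomorphism $\calf$ of Theorem~\mref{thm:pf}. Because Theorem~\mref{thm:qshsh} is phrased for a countably infinite index set $\ug$, I would first fix such a $\ug$ large enough to contain all the variables occurring in $\vec{u}$ and $\vec{v}$. This costs nothing: any given pair of \MZV fractions involves only finitely many variables, and Theorem~\mref{thm:pf} furnishes the homomorphism $\calf\colon\calh(\zsg{\ug})\to\ZZ(U)$ for every set $\ug$, in particular for this one.

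Next I would apply $\calf$ to both sides of the identity
\[
\spair{\vec{\rind}}{\vec{u}}\qsshab \spair{\vec{\sind}}{\vec{v}}
=\sum \bigg(\prod_{i=1}^{k+\ell}c_{\vec{\rind},\vec{\sind}}^{\vec{\tind},(\varphi,\psi)}(i)\bigg)\spair{\vec{\tind}}{\vec{u}\ssha_{(\varphi,\psi)}\vec{v}}
\]
supplied by Theorem~\mref{thm:qshsh}, the sum running over the index set displayed there. On the left, multiplicativity of $\calf$ gives $\calf\big(\spair{\vec{\rind}}{\vec{u}}\qsshab \spair{\vec{\sind}}{\vec{v}}\big)=\calf\spair{\vec{\rind}}{\vec{u}}\,\calf\spair{\vec{\sind}}{\vec{v}}=\frakf\pfpair{\vec{\rind}}{\vec{u}}\,\frakf\pfpair{\vec{\sind}}{\vec{v}}$; this is exactly the shuffle relation recorded in Corollary~\mref{co:pfshf}. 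On the right, $\ZZ$-linearity of $\calf$ lets me pull the map through the finite sum and past the integer coefficients $\prod_i c_{\vec{\rind},\vec{\sind}}^{\vec{\tind},(\varphi,\psi)}(i)$, and then the defining rule $\calf\spair{\vec{\tind}}{\vec{u}\ssha_{(\varphi,\psi)}\vec{v}}=\frakf\pfpair{\vec{\tind}}{\vec{u}\ssha_{(\varphi,\psi)}\vec{v}}$ turns each abstract symbol into the corresponding \MZV fraction. Equating the two images yields Eq.~(\mref{eq:pfprod}).

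I expect no genuine obstacle at this stage, since both hard ingredients are already in place: the combinatorial identity of Theorem~\mref{thm:qshsh} is imported from~\mcite{GX2}, and the substantive content, namely that $\calf$ is multiplicative, has been established in Theorem~\mref{thm:pf} through the integral representation of Proposition~\mref{pp:frint} together with the Rota--Baxter type relation~(\mref{eq:intI}). The only points deserving a word of care are the compatibility of the two ambient sets $\ug$ noted above and the fact that the coefficients are integers, so that $\ZZ$-linearity of $\calf$ is all that is needed to distribute over the sum.
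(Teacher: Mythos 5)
Your proposal is correct and matches the paper's own argument exactly: the paper derives Theorem~\ref{thm:pfprod} by applying the shuffle relation of Corollary~\ref{co:pfshf} (i.e., the multiplicativity of $\calf$ from Theorem~\ref{thm:pf}) to the abstract identity of Theorem~\ref{thm:qshsh}, using linearity to pass through the sum and integer coefficients. Your extra remarks on choosing $\ug$ large enough and on the coefficients being integers are harmless refinements of the same one-line deduction.
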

Assume $r_1,s_1\geq 2$. Taking the sum $\sum\limits_{u_1,\cdots,u_k\geq 1}\sum\limits_{v_1,\cdots,v_\ell\geq 1}$ on
both sides of Eq.~(\mref{eq:pfprod}), we obtain the generalization of Euler's decomposition formula of two \MZVs in~\cite[Corollary~2.5]{GX2}.

We give some examples of Theorem~\mref{thm:pfprod}. We will only provide details for the first example and will refer the reader to \cite[Section 2.4]{GX2} for further details on the computations.

\medskip
\noindent
{\bf 1. The case of $k=\ell=1$.} Then $\vec{\rind}=\rind_1$
and $\vec{\sind}=\sind_1$ are positive integers, and $\vec{u}=u_1$
and $\vec{v}=v_1$ are variables. Let $\vec{\tind}=(\tind_1,
\tind_2)\in\ZZ_{\geq 1}^2$ with $\tind_1+\tind_2=\rind_1+\sind_1$.
If $(\varphi,\psi)\in\indI_{1,1}$, then either $\varphi(1)=1$ and
$\psi(1)=2$, or $\psi(1)=1$ and $\varphi(1)=2$.  If $\varphi(1)=1$
and $\psi(1)=2$, then as in \cite[Section 2.4]{GX2}, we obtain
$$ c_{\rind_1,\sind_1}^{\vec{\tind}, (\varphi,\psi)}=
c_{\rind_1,\sind_1}^{\vec{\tind}, (\varphi,\psi)}(1)\,
c_{\rind_1,\sind_1}^{\vec{\tind}, (\varphi,\psi)}(2)
=\binc{\tind_1-1}{\rind_1-1}.$$
By Eq.~(\mref{eq:mulind}), we have
$$
\vec{u}\ssha_{(\varphi,\psi)} \vec{v} =(u_1,v_1).
$$

If $\psi(1)=1$ and $\varphi(1)=2$, then we similarly obtain
$$ c_{\rind_1,\sind_1}^{\vec{\tind}, (\varphi,\psi)}=
 c_{\rind_1,\sind_1}^{\vec{\tind}, (\varphi,\psi)}(1)\, c_{\rind_1,\sind_1}^{\vec{\tind}, (\varphi,\psi)}(2)=
\binc{\tind_1-1}{\sind_1-1}.$$ By Eq.~(\mref{eq:mulind}), we have $
\vec{u}\ssha_{(\varphi,\psi)} \vec{v} =(v_1,u_1). $
Therefore,
$$
\frakf\pfpair{\rind_1}{u_1} \frakf\pfpair{\sind_1}{v_1} = \hspace{-.5cm}
\sum_{\tind_1,\tind_2\geq 1, \tind_1+\tind_2=\rind_1+\sind_1}
\binc{\tind_1-1}{\rind_1-1} \frakf\pfpair{\tind_1,\tind_2}{u_1,v_1} +
\hspace{-.5cm}
\sum_{\tind_1,\tind_2\geq 1, \tind_1+\tind_2=\rind_1+\sind_1}
\binc{\tind_1-1}{\sind_1-1} \frakf\pfpair{\tind_1,\tind_2}{v_1,u_1}.
$$
That is,
\begin{equation}
\frac{1}{u_1^{r_1}}\, \frac{1}{v_1^{s_1}}= \hspace{-.2cm}
\sum_{\tind_1,\tind_2\geq 1, \tind_1+\tind_2=\rind_1+\sind_1}
\binc{\tind_1-1}{\rind_1-1} \frac{1}{(u_1+v_1)^{t_1} v_1^{t_2}} +
\hspace{-.2cm}
\sum_{\tind_1,\tind_2\geq 1, \tind_1+\tind_2=\rind_1+\sind_1}
\binc{\tind_1-1}{\sind_1-1} \frac{1}{(u_1+v_1)^{t_1} u_1^{t_2}}.
\notag
%\mlabel{eq:pf1-1}
\end{equation}
This coincides with the well-known partial fraction formula~\cite[Eq.~(19)]{GKZ}
recalled in Eq.~(\mref{eq:prod2}).

\medskip
\noindent {\bf 2. The case of $r=1,s=2$.} By a similar computation of the coefficients, Eq.~(\ref{eq:pfprod})
becomes Eq.~(\mref{eq:1-2}).

\medskip
\noindent
{\bf 3. The case of $r=s=2$.}
In this case $\spair{\vec{\rind}}{\vec{w}}
=\spair{\rind_1,\rind_2}{w_1,w_2}$ and
$\spair{\vec{\sind}}{\vec{z}}=\spair{\sind_1, \sind_2}{z_1,z_2}$.
Let $\vec{\tind}=(\tind_1, \tind_2, \tind_3,\tind_4)\in\ZZ_{\geq
1}^4$ with
$\tind_1+\tind_2+\tind_3+\tind_4=\rind_1+\rind_2+\sind_1+\sind_2$.
Then there are $\binc{4}{2}=6$ choices of
$(\varphi,\psi)\in\indI_{2,2}$.
Then from Theorem~\mref{thm:pfprod}, we similarly derive
{\small \allowdisplaybreaks
\begin{eqnarray}
\lefteqn{\frakf\spair{\rind_1,\rind_2}{u_1,u_2}\, \frakf\spair{\sind_1,\sind_2}{v_1,v_2}}
\notag
\\
&=
 & \hspace{-.8cm}
\sum\limits_{\tiny \begin{array}{c}\tind_1\geq 2,\tind_2,\tind_3\geq 1
\\ \tind_1+\tind_2+\tind_3=r_1+r_2+s_1
\end{array}} \hspace{-.8cm}
 \binc{\tind_1-1}{\rind_1-1}\binc{\tind_2-1}{\rind_2-1}
\frakf\spair{t_1,t_2,t_3,s_2}{u_1,u_2,v_1,v_2}
 + \hspace{-.8cm} \sum\limits_{\tiny \begin{array}{c}\tind_1\geq 2,\tind_2,\tind_3\geq 1
\\ \tind_1+\tind_2+\tind_3=r_1+s_1+s_2
\end{array}} \hspace{-.8cm}
 \binc{\tind_1-1}{s_1-1}\binc{\tind_2-1}{s_2-1}
\frakf\spair{t_1,t_2,t_3,r_2}{v_1,v_2,u_1,u_2}
\notag \\
&&  +
\sum\limits_{\tiny \begin{array}{c}\tind_1\geq 2,\tind_2,\tind_3,\tind_4\geq 1
\\ \tind_1+\tind_2+\tind_3+\tind_4= \\
\rind_1+\rind_2+\sind_1+\sind_2
\end{array}}
\bigg [
\binc{\tind_1-1}{\rind_1-1} \binc{\tind_2-1}{t_1+t_2-r_1-s_1}
\binc{\tind_3-1}{\sind_2-\tind_4}
\frakf\spair{t_1,t_2,t_3,t_4}{u_1,v_1,u_2,v_2}
\notag\\
&&  \qquad +\binc{\tind_1-1}{ \sind_1-1}\binc{\tind_2-1}{
t_1+t_2-r_1-s_1}\binc{\tind_3-1}{\rind_2-\tind_4}
\frakf\spair{t_1,t_2,t_3,t_4}{v_1,u_1,v_2,u_2}
\notag
%\mlabel{eq:c2-2}
\\
&& \qquad
+\binc{\tind_1-1}{\rind_1-1} \binc{\tind_2-1}{t_1+t_2-r_1-s_1}
\binc{\tind_3-1}{\sind_2-1}
\frakf\spair{t_1,t_2,t_3,t_4}{u_1,v_1,v_2,u_2}
+ \binc{\tind_1-1}{ \sind_1-1}\binc{\tind_2-1}{
t_1+t_2-r_1-s_1}\binc{\tind_3-1}{
\rind_2-1}\frakf \spair{t_1,t_2,t_3,t_4}{v_1,u_1,u_2,v_2} \bigg ].
\notag
\end{eqnarray}
}

\end{document}